\begin{document}

\def\pd#1#2{\frac{\partial#1}{\partial#2}}
\def\dfrac{\displaystyle\frac}
\let\oldsection\section
\renewcommand\section{\setcounter{equation}{0}\oldsection}
\renewcommand\thesection{\arabic{section}}
\renewcommand\theequation{\thesection.\arabic{equation}}

\def\Xint#1{\mathchoice
  {\XXint\displaystyle\textstyle{#1}}%
  {\XXint\textstyle\scriptstyle{#1}}%
  {\XXint\scriptstyle\scriptscriptstyle{#1}}%
  {\XXint\scriptscriptstyle\scriptscriptstyle{#1}}%
  \!\int}
\def\XXint#1#2#3{{\setbox0=\hbox{$#1{#2#3}{\int}$}
  \vcenter{\hbox{$#2#3$}}\kern-.5\wd0}}
\def\ddashint{\Xint=}
\def\dashint{\Xint-}

\newcommand{\cb}{\color{blue}}
\newcommand{\cred}{\color{red}}
\newcommand{\be}{\begin{equation}\label}
\newcommand{\ee}{\end{equation}}
\newcommand{\bea}{\begin{eqnarray}\label}
\newcommand{\beaa}{\begin{eqnarray}}
\newcommand{\eea}{\end{eqnarray}}
\newcommand{\nn}{\nonumber}
\newcommand{\intO}{\int_\Omega}
\newcommand{\Om}{\Omega}
\newcommand{\cd}{\cdot}
\newcommand{\pa}{\partial}
\newcommand{\ep}{\varepsilon}
\newtheorem{thm}{Theorem}
\newtheorem{lem}{Lemma}[section]
\newtheorem{proposition}{\indent Proposition}[section]
\newtheorem{dnt}{Definition}[section]
\newtheorem{remark}{Remark}
\newtheorem{cor}{Corollary}[section]
\allowdisplaybreaks

\title{Boundedness in a  quasilinear fully parabolic Keller-Segel
system of higher dimension with logistic source\thanks{{Supported by
the National Natural Science Foundation of China (11171048)}}}
\author{Cibing Yang \quad Xinru Cao \quad Zhaoxin Jiang \quad
Sining Zheng\thanks{Corresponding author.
E-mail: 1145250006@qq.com (C. Yang), caoxinru@gmail.com (X. Cao), jzxdlut@163.com (Z. Jiang), snzheng@dlut.edu.cn (S. Zheng)} \\
\footnotesize School of Mathematical Sciences, Dalian University of
Technology, Dalian 116024, P.R. China} \maketitle
\date{}
\maketitle {\bf Abstract.} This paper deals with the higher
dimension quasilinear parabolic-parabolic Keller-Segel system
involving a source term of logistic type $
u_t=\nabla\cdot(\phi(u)\nabla u)-\chi\nabla\cdot(u\nabla v)+g(u)$,
$\tau v_t=\Delta v-v+u$ in $\Omega\times (0,T)$, subject to
nonnegative initial data and homogeneous Neumann boundary condition,
where $\Omega$ is smooth and bounded domain in $\mathbb{R}^n$, $n\ge
2$,  $\phi$ and $g$ are smooth and positive functions satisfying
$ks^p\le\phi$ when $s\ge s_0>1$, $g(s) \le as - \mu s^2$ for $s>0$
with $g(0)\ge0$ and constants $a\ge 0$, $\tau,\chi,\mu>0$. It was
known that the model without the logistic source admits both bounded
and unbounded solutions, identified via the critical exponent
$\frac{2}{n}$. On the other hand, the model is just a critical case
with the balance of logistic damping and aggregation effects, for
which the property of solutions should be determined by the
coefficients involved. In the present paper it is proved that there
is $\theta_0>0$ such that the problem admits global bounded
classical solutions, regardless of the size of initial data and
diffusion whenever
$\frac{\chi}{\mu}<\theta_0$. This shows the substantial effect of the logistic source to the behavior of solutions.\\

{\bf Keywords}: Boundedness; Keller-Segel system; Chemotaxis; Global existence; Logistic source.\\

{\bf Mathematics Subjection Classification}:  {92C17; 35K55; 35B35; 35B40.
\section{Introduction}

In this paper, we consider the higher dimension quasilinear
parabolic-parabolic Keller-Segel system with logistic source \be{a}
\left\{
\begin{array}{llc}
u_t=\nabla\cdot(\phi(u)\nabla u)-\nabla\cdot(\psi(u)\nabla v)+g(u), &(x,t)\in \Omega\times (0,T),\\[6pt]
\displaystyle
\tau v_t=\Delta v-v+u, &(x,t)\in\Omega\times (0,T),\\[6pt]
\displaystyle
\frac{\partial u}{\partial n}=\frac{\partial v}{\partial n}=0,&(x,t)\in \partial\Omega\times (0,T),\\[6pt]
u(x,0)=u_0(x),\quad  v(x,0)=v_0(x),
& x\in\Omega,
\end{array}
\right. \ee where $\Omega\subset\mathbb{R}^n$ ($n\ge 2$) is
 a bounded domain
with smooth boundary, $\tau>0$. Functions $\phi,\psi\in C^2([0,\infty))$
satisfy
\begin{align}\label{1.2}
   & \psi(s)=\chi s\\\label{1.3}
   & \phi(s)>0, ~~s\ge0,~~k s^p\le \phi(s),~~ s\ge s_0,
\end{align}
 with $\chi>0$, $k>0$, $p\in\mathbb{R}$, $s_0>1$, and $g \in C^{\infty}([0,\infty))$ fullfills
  \be{g}
    g(s)\le as-\mu s^2,~~ s>0
  \ee
with $g(0){\ge}0$, and constants $a\ge 0$, $\mu>0$. Here, $u$ and
$v$ represent the density of cells and the concentration of chemical
signal respectively. The classical Keller-Segel system can be
obtained by setting $\phi\equiv\psi\equiv 1$ and $g\equiv 0$ in
(\ref{a}) , which models the mechanism of chemotaxis, and has been
extensively studied since 1970, we refer to
\cite{H,jaeger_luckhaus,W4,W5} and the reference therein.

Eq.\,(\ref{a}) with $g(u)\equiv 0$ is a type of refined models
pursued by Hillen and Painter \cite{HP}, with the bacterial cells
having a positive size, the so-called volume-filling effect. Beyond
this, more general functions $\phi$ and $\psi$ are involved to
denote the diffusivity and chemotatic sensitivity, respectively
\cite{CW,HW,W3}. When $\phi(s)\sim s^p$ and $\psi(s)\sim s^q$ for
large $s$, a critical exponent $\frac{2}{n}$ on the interplay of
$\phi$ and $\psi$ has been found to identify boundedness and
unboundedness. Namely, if $q-p<\frac{2}{n}$, then all solutions are
global and uniformly bounded  \cite{WD,TyW}; however, if
$q-p>\frac{2}{n}$, unbounded solutions do exist \cite{W3}, even
finite-time blow-up may occur under some additional conditions $n\ge
3$ and $q\ge1$ \cite{CS,WD}.

 Apart from the aforementioned system, a source of logistic type is included in (\ref{a}) to describe
 the spontaneous growth of cells. The effect of preventing ultimate growth has been widely studied \cite{OY,OTYM,W1,TW}.
In the related classical semilinear chemotaxis systems, that is when
$\phi(u)\equiv 1$ and $\psi(u)=\chi u$ with $\chi>0$, such
proliferation mechanisms are known to prevent chemotactic collapse:
In \cite{TW}, for instance, it was proved that if $\tau=0$ and
$\mu>\frac{(n-2)_+}{n}\chi$, solutions of the parabolic-elliptic
system are global and remain bounded. The same conclusion is true
for the fully parabolic system with $\tau>0$ if either $n\le2$,
$\mu>0$ \cite{OY}, or $n\ge 3$ and $\mu>\mu_0$ with some constant
$\mu_0(\chi)>0$ \cite{W1}. This is in sharp contrast to the
possibility of blow-up which is known to occur in such systems when
$g\equiv 0$ and $n\ge 2$
\cite{herrero_velazquez,jaeger_luckhaus,nagai2001,W5}.

In this context, we intend to study (\ref{a}) with $\tau>0$ under
the conditions (\ref{1.2})--(\ref{g}). It is our purpose to
investigate the interaction among the triple of nonlinear diffusion,
aggregation and the logistic absorption. By taking $u^{\gamma-1}$ as
the test function to the first equation, and then substituting the
second equation, the standard $L^\gamma$ estimate argument yields
\begin{align}\nn
\frac{1}{\gamma}\frac{d}{dt}\intO u^\gamma\le -(\gamma-1)\intO u^{\gamma+p-2}|\nabla u|^2
+\frac{\gamma-1}{\gamma+q-1}\intO u^{\gamma+q}-\frac{\gamma-1}{\gamma+q-1}\intO u^{\gamma+q-1}\Delta v\\\label{Lp}
~~~~~~~~~~~~~~~~~~~~~~~~+a\intO u^{\gamma}-\mu\intO u^{\gamma+1}.
\end{align}
Comparing the terms $\displaystyle\frac{\gamma-1}{\gamma+q-1}\intO
u^{\gamma+q}$ and $\displaystyle-\mu\intO u^{\gamma+1}$, it is easy
to find that $q=1$ is critical. It has been proved that when $q<1$,
the logistic dampening rules out the occurrence of blow-up
regardless of diffusion \cite{C1}. And when $q>1$, the strong
diffusion  with $q-p<\frac{2}{n}$ ensures global boundedness by
\cite{TyW}, without the help of the logistic damping. The critical
case $q=1$ is more involved: from (\ref{Lp}) we may expect that
under the balance of logistic damping and aggregation effects, the
coefficients would determine weather the solution is bounded. In
\cite{C2}, it has been proved that when $q=1$ with $\tau=0$, the
solutions are bounded if $\mu>(1-\frac{2}{n(1-p)}_+)\chi$ for the
parabolic-elliptic case. This makes an agreement with the above
expectation. The main result of the present paper is the following
theorem for the fully parabolic Keller-Segel system.

\begin{thm}\label{th}

Suppose that $\Omega \subset \mathbb{R}^n$ ($n\ge 2$) is a bounded
domain with smooth boundary, and  $\chi,\mu,\tau>0$. Assume that
$\psi(u)=\chi u$, $\phi$ and $g$ satisfy (\ref{1.3})-(\ref{g}). Then
there is $\theta_0>0$ such that if $\frac{\chi}{\mu}<\theta_0$, for
any nonnegative $u_0\in C^0(\bar{\Omega})$ and $v_0\in
W^{1,r}(\Omega)$ with $r>n$, Eq.\,(\ref{a}) uniquely admits a
classical solution $(u,v)$ such that $u\in
C^0(\bar{\Omega}\times[0,\infty))\cap
C^{2,1}(\bar{\Omega}\times(0,\infty))$ and $v\in
C^0(\bar{\Omega}\times[0,\infty))\cap
C^{2,1}(\bar{\Omega}\times(0,\infty))\cap L_{\rm loc}^\infty
([0,T_{\max});W^{1,r}(\Omega))$. Moreover, $(u,v)$ is bounded in
$\Omega \times (0,\infty)$.
\end{thm}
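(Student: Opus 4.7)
The plan is to follow a by now standard three-step program for quasilinear chemotaxis systems with logistic growth: (i) local existence together with an extensibility criterion, (ii) an a priori $L^\gamma$ estimate for $u$ with $\gamma$ that can be chosen as large as we need, and (iii) bootstrap from $L^\gamma$ to $L^\infty$. Step (i) is supplied by the standard parabolic fixed-point scheme available in the Keller--Segel literature; it yields a maximal classical solution $(u,v)$ on $[0,T_{\max})$ with the property that $T_{\max}<\infty$ forces $\limsup_{t\nearrow T_{\max}}\|u(\cdot,t)\|_{L^\infty(\Omega)}=\infty$. A short preliminary gives the uniform $L^1$-bound $\intO u(\cdot,t)\le M$, obtained by integrating the first equation and using $g(u)\le au-\mu u^2$ together with Cauchy--Schwarz.

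The heart of the argument is step (ii). Testing the $u$-equation against $u^{\gamma-1}$, invoking (\ref{1.3}) on the diffusion term, and substituting $-\Delta v=u-v-\tau v_t$ from the second equation produces
\[
\frac{1}{\gamma}\frac{d}{dt}\intO u^\gamma+k(\gamma-1)\intO u^{\gamma+p-2}|\nabla u|^2+\frac{\chi(\gamma-1)}{\gamma}\intO u^\gamma v\le\Bigl(\frac{\chi(\gamma-1)}{\gamma}-\mu\Bigr)\intO u^{\gamma+1}-\frac{\chi\tau(\gamma-1)}{\gamma}\intO u^\gamma v_t+a\intO u^\gamma,
\]
so the coefficient in front of $\intO u^{\gamma+1}$ is negative precisely when $\chi/\mu<\gamma/(\gamma-1)$, which is the channel through which the smallness hypothesis enters. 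The genuinely new difficulty in the fully parabolic regime is the cross-term $\intO u^\gamma v_t$, which I would control by pairing the above estimate with an auxiliary functional of $v$---most naturally a quantity of the form $\intO|\nabla v|^{2\gamma}$ derived by testing the $v$-equation against $-\nabla\cd(|\nabla v|^{2\gamma-2}\nabla v)$ and using the pointwise identity $2\nabla v\cd\nabla\Delta v=\Delta|\nabla v|^2-2|D^2v|^2$. Forming a coupled functional $\mathcal{F}(t)=\intO u^\gamma+\kappa\intO|\nabla v|^{2\gamma}$, the dissipation supplied by the $v$-equation absorbs the $v_t$ coupling, Young's inequality reduces $a\intO u^\gamma$ to $\ep\intO u^{\gamma+1}+C_\ep$, and a Gagliardo--Nirenberg interpolation allows part of the diffusion dissipation $\intO u^{\gamma+p-2}|\nabla u|^2\sim\intO|\nabla u^{(\gamma+p)/2}|^2$ to swallow additional $\intO u^{\gamma+1}$. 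If the combined coefficient in front of $\intO u^{\gamma+1}$ is strictly negative, one arrives at $\mathcal{F}'(t)+c\mathcal{F}(t)\le C$, which yields the desired $L^\gamma$ bound for $u$; tracking the constants in this inequality is what determines the explicit threshold $\theta_0>0$.

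Step (iii) is then routine: once $\|u(\cdot,t)\|_{L^\gamma(\Omega)}$ is bounded for $\gamma>\max\{n/2,1\}$ as large as wanted, the Neumann heat-semigroup smoothing estimates applied to the mild formulation of the $v$-equation upgrade this to a bound on $\|\nabla v(\cdot,t)\|_{L^\infty(\Omega)}$; feeding this back into the first equation and running a Moser--Alikakos iteration produces $\|u(\cdot,t)\|_{L^\infty(\Omega)}\le C$ uniformly in $t\in(0,T_{\max})$, and the extensibility criterion then forces $T_{\max}=\infty$ and delivers the asserted global boundedness.

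The step I expect to be genuinely delicate is the coupled energy estimate in (ii): one must simultaneously choose $\gamma$ large enough for the bootstrap in (iii), exploit the nonlinear-diffusion dissipation (which for unfavourable $p$ is weak), and balance the Young-type splittings so that the combined coefficient in front of $\intO u^{\gamma+1}$ is strictly negative. Extracting from this interplay an explicit $\theta_0>0$ that depends only on $n$, $p$, $\Omega$ and the other structural constants, but not on the initial data or on $T_{\max}$, is the technical heart of the proof.
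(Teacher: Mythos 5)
Your overall architecture (local existence plus extensibility, an $L^\gamma$ estimate for large $\gamma$, then Moser iteration) matches the paper, and your step (iii) is fine. The genuine gap is in step (ii), which you correctly identify as the heart of the matter but do not actually carry out, and the mechanism you sketch for it would not work as stated. After substituting $-\Delta v=u-v-\tau v_t$ you are left with the term $-\frac{\chi\tau(\gamma-1)}{\gamma}\intO u^\gamma v_t$, and you propose to absorb it by coupling with $\intO|\nabla v|^{2\gamma}$. But differentiating $\intO|\nabla v|^{2\gamma}$ in time produces no term that pairs with $\intO u^\gamma v_t$; the coupled-functional method of Winkler's logistic paper \cite{W1}, which is what you are implicitly invoking, never isolates $v_t$ at all --- it keeps the cross term in the form $\intO \nabla u^\gamma\cd\nabla v$, splits it by Young against the diffusion dissipation and $\intO u^\gamma|\nabla v|^2$, and then runs the $|\nabla v|^{2\gamma}$ energy identity. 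Moreover, that route has two further obstructions you do not address: (a) testing the $v$-equation against $-\nabla\cd(|\nabla v|^{2\gamma-2}\nabla v)$ produces the boundary integral $\int_{\pa\Om}|\nabla v|^{2\gamma-2}\frac{\pa|\nabla v|^2}{\pa\nu}$, which is nonpositive only on convex domains; the theorem here is stated for arbitrary smooth bounded $\Om$, and handling this term on non-convex domains requires a separate trace-embedding argument as in \cite{I_nonconvex,J_nonconvex}. (b) Your use of Gagliardo--Nirenberg on $\intO|\nabla u^{(\gamma+p)/2}|^2$ makes the admissible $\gamma$ and the final threshold depend on $p$, whereas the stated result (see Remark 1) is independent of $p$ and in fact never uses the lower bound on $\phi$ in the energy estimate.

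The paper's actual route is shorter and avoids all of this: it discards the diffusion term entirely (only $\phi>0$ is used), keeps the cross term as $-\chi\frac{\gamma-1}{\gamma}\intO u^\gamma\Delta v\le\eta\intO u^{\gamma+1}+c\,\eta^{-\gamma}\chi^{\gamma+1}\intO|\Delta v|^{\gamma+1}$, inserts an artificial damping $-\frac{\gamma+1}{\tau\gamma}\intO u^\gamma$, applies the variation-of-constants formula, and then controls $\int_{s_0}^t\intO {\rm e}^{\frac{\gamma+1}{\tau}s}|\Delta v|^{\gamma+1}$ by $C_{\gamma+1}\int_{s_0}^t\intO {\rm e}^{\frac{\gamma+1}{\tau}s}u^{\gamma+1}$ plus data terms via an exponentially weighted version of maximal Sobolev regularity for the heat equation (Lemma 2.2). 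The smallness condition then reads $\mu>\eta+cC_{\gamma+1}\eta^{-\gamma}\chi^{\gamma+1}$, and optimizing over $\eta$ yields a threshold of the form $\frac{\chi}{\mu}<\theta_0$. If you want to complete your proof, you should either adopt this maximal-regularity device for the $\Delta v$ term, or commit fully to the coupled functional of \cite{W1} and supply both the boundary-term treatment on non-convex domains and the explicit bookkeeping that produces a $\theta_0$ independent of the data.
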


\begin{remark}
{\rm We underline that the above result is independent of the value
of $p$ in (\ref{1.3}), and thus extends the analogue result for the
semilinear case \cite{W1}. Moreover, due to the technique used here,
the convexity of $\Om$ (required in \cite{W1}) is unnecessary in our
theorem.}
\end{remark}

  Unlike using the trace embedding technique to estimate the boundary integral in \cite{I_nonconvex,J_nonconvex},
our approach strongly relies on the {\em Maximal Sobolev
Regularity}.

The paper is arranged as follows. In section 2, we deal with the
local existence and the extensibility of classical solution to
(\ref{a}) as well as a variation of Maximal Sobolev Regularity.
Section 3 will be devoted to prove Theorem \ref{th}.

\section{ Preliminaries}

The local solvability to (\ref{a}) for sufficiently smooth initial
data can be addressed by methods involving standard parabolic
regularity theory in a suitable fixed point framework. In fact, one
can thereby also derive a sufficient condition for extensibility of
a given local-in-time solution. Details of the proof can be founded
in \cite{C1}.

\begin{lem}\label{lem2}
Suppose $\Omega\subset \mathbb{R}^n$ ($n\ge3$) is a bounded domain
with smooth boundary, $\phi$ and $\psi$ satisfy
(\ref{1.2})-(\ref{1.3}), $g$ fulfills (\ref{g}), $u_0\in
C^0(\bar{\Omega})$ and $v_0\in W^{1,r}({\Omega})$ (with some $r>n$)
both are nonnegative. Then there exists  $(u,v)\in
(C^0(\bar{\Omega}\times[0,T_{\max}))\cap
C^{2,1}(\bar{\Omega}\times(0,T_{\max})))^2$ with
$T_{\max}\in(0,\infty)$ classically solving (1.1) in
$\Omega\times(0,T_{\max})$. Moreover, if $T_{\max}<\infty$, then
\be{Tmax}\mathop{\limsup}\limits_{{t\nearrow T_{\max}}}
\|u(\cdot,t)\|_{L^{\infty}(\Omega)}=\infty. \ee
\end{lem}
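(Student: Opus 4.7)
My plan is to prove the lemma via a classical fixed-point construction followed by a bootstrap argument for the extensibility criterion, essentially adapting the scheme in \cite{C1}. The key idea is to decouple the system: given a candidate $\tilde u$, first solve the $v$-equation (which is linear in $v$) to obtain a $v$, and then insert this $v$ into the quasilinear $u$-equation to obtain a new $u$; the composition of these two steps will be a contraction on a short time interval.

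Concretely, fix $R > \|u_0\|_{L^\infty(\Omega)} + 1$ and, for $T > 0$ to be chosen small, define
\begin{equation*}
X_{T,R} := \{\tilde u \in C^0(\bar\Omega \times [0,T]) \,:\, \tilde u \geq 0,\ \tilde u(\cdot,0) = u_0,\ \|\tilde u\|_{L^\infty} \leq R\}.
\end{equation*}
For $\tilde u \in X_{T,R}$, I would let $v$ solve $\tau v_t = \Delta v - v + \tilde u$ with homogeneous Neumann data and initial value $v_0$; the Neumann heat semigroup together with $v_0 \in W^{1,r}(\Omega)$ yields $v \in L^\infty([0,T]; W^{1,r}(\Omega))$ with a controlled $\|\nabla v\|_{L^\infty}$ on compact subsets of $(0,T]$. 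Treating this $v$ as given, invoke Amann's theory for quasilinear parabolic equations to produce a classical solution $u$ of
\begin{equation*}
u_t = \nabla \cdot (\phi(u) \nabla u) - \chi \nabla \cdot (u \nabla v) + g(u),\quad \partial_\nu u|_{\partial \Omega} = 0,\quad u(\cdot,0) = u_0,
\end{equation*}
and set $\Phi(\tilde u) := u$. The smoothness of $\phi$ and $g$ together with continuity of the data-to-solution maps makes $\Phi$ a contraction on $X_{T,R}$ once $T$ is small enough, so Banach's fixed-point theorem supplies a local classical solution. A standard continuation argument then produces a maximal existence time $T_{\max} \in (0,\infty]$.

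For the extensibility criterion I argue by contradiction: assume $T_{\max} < \infty$ but $\|u(\cdot,t)\|_{L^\infty} \leq M$ throughout $[0,T_{\max})$. Then the source in the $v$-equation is uniformly bounded, so parabolic $L^p$--$L^q$ semigroup estimates yield a uniform bound on $\|\nabla v(\cdot,t)\|_{L^\infty}$ on $[0,T_{\max})$. Feeding this back, the $u$-equation becomes uniformly parabolic with controlled coefficients, and Ladyzhenskaya--Solonnikov--Ural'tseva theory delivers uniform H\"{o}lder, hence $C^{2+\alpha,1+\alpha/2}$, estimates on $u$ up to $T_{\max}$. These regularity bounds allow us to restart the local existence argument at $t = T_{\max}$, contradicting maximality and establishing (\ref{Tmax}).

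The main technical obstacle lies in this final bootstrap: since $\phi$ is only assumed positive and is allowed to grow like $s^p$ for large $s$, uniform parabolicity of the $u$-equation is not automatic. However, the hypothesis $\|u\|_{L^\infty} \leq M$ confines the diffusion coefficient to the compact range $\{\phi(s) : 0 \leq s \leq M\}$, on which $\phi$ is both bounded above (by continuity) and bounded below (by positivity), so the equation is genuinely uniformly parabolic along the solution. This is precisely what makes $\|u\|_{L^\infty}$ the correct quantity to control blow-up, and why all the subsequent work of the paper can be phrased in terms of $L^\infty$ a priori estimates on $u$.
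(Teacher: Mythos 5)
Your proposal matches the paper's approach: the paper gives no proof of this lemma, deferring entirely to the ``standard parabolic regularity theory in a suitable fixed point framework'' of \cite{C1}, which is exactly the decouple-solve-contract scheme plus the bounded-$u$ $\Rightarrow$ bounded-$\nabla v$ $\Rightarrow$ uniform parabolicity $\Rightarrow$ LSU/Schauder bootstrap for the extensibility criterion that you outline. Your closing observation --- that an a priori $L^\infty$ bound on $u$ confines $\phi(u)$ to a compact subset of $(0,\infty)$ and thus restores uniform parabolicity despite $\phi$ being only positive --- is precisely the reason the criterion (\ref{Tmax}) is stated in terms of $\|u\|_{L^\infty}$.
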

Note that in (\ref{Lp}), the term $\displaystyle-\frac{\gamma-1}{\gamma+q-1}\intO u^{\gamma+q-1}\Delta v$ is unsigned.
 We thus intend to estimate its absolute value adequately.
For this purpose, we will make use of the following property
referred to as a variation of {Maximal Sobolev Regularity}, which
will play an important role in the proof of our main result. The following Lemma is not
the original version of a corresponding statement in \cite[Theorem 3.1]{MJ}, but by means of a simple transformation
by including an exponential weight function as in \cite{C}. 
\begin{lem}\label{lem2.2}
Let $r\in(1,\infty),\tau>0$. Consider the following evolution
equation
\begin{eqnarray}\label{6} \left\{
\begin{array}{llll}\displaystyle \tau v_t=\Delta v
-v+u,&(x,t)\in\Om\times(0,T),\\[4pt]
\displaystyle\frac{\partial
v}{\partial \nu}=0,&(x,t)\in\partial\Om\times(0,T),\\[4pt]
\displaystyle v(x,0)= v_0(x),&x\in\Om.\\[4pt]
\end{array}\right. \end{eqnarray}
For each $v_0\in W^{2,r}(\Omega)$ $(r>n)$ with $\frac{\partial
v_0}{\partial \nu}=0$ on $\partial\Omega$ and any $u\in
L^r((0,T);L^r(\Omega))$, there exists a unique solution
\begin{eqnarray*}
v\in W^{1,r}\big((0,T);L^r(\Omega)\big)\cap L^r\big((0,T);W^{2,r}(\Omega)\big).
\end{eqnarray*}
Moreover, there exists $C_r>0$, such that if $s_0\in[0,T)$,
$v(\cdot,s_0)\in W^{2,r}(\Omega)(r>n)$ with $\frac{\partial
v(\cdot,s_0)}{\partial n}=0$, then \beaa\label{2.2.1}
\int_{s_0}^T\intO {\rm e}^{\frac r{\tau}s}|\Delta v|^r\le
C_r\int_{s_0}^T\intO {\rm e}^{\frac r{\tau}s}u^r + C_r\tau {\rm
e}^{\frac r{\tau}s_0}\left(\|v(\cdot,s_0)\|_{L^r(\Omega)}^r+ \|\Delta
v(\cdot,s_0)\|_{L^r(\Omega)}^r\right). \eea
\end{lem}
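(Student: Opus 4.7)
The plan is to deduce the weighted inequality from the unweighted maximal $L^r$ regularity theorem of \cite[Theorem 3.1]{MJ} by means of an exponential-weight substitution, following the trick of \cite{C}. Existence and uniqueness of $v$ in $W^{1,r}((0,T);L^r(\Omega))\cap L^r((0,T);W^{2,r}(\Omega))$ are a direct citation of \cite{MJ}; the only new content is the explicit weighted estimate, and everything below concerns that step.

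The substitution I would use is $w(x,t):=e^{t/\tau}v(x,t)$ on the cylinder $\Omega\times(s_0,T)$. Differentiating in $t$ gives $\tau w_t=w+\tau e^{t/\tau}v_t$, and inserting $\tau v_t=\Delta v-v+u$ shows that the $+w$ and $-w=-e^{t/\tau}v$ contributions cancel exactly, so that
$$\tau w_t-\Delta w=e^{t/\tau}u \quad\text{in } \Omega\times(s_0,T), \qquad \partial_\nu w=0 \quad\text{on } \partial\Omega\times(s_0,T),$$
with initial datum $w(\cdot,s_0)=e^{s_0/\tau}v(\cdot,s_0)\in W^{2,r}(\Omega)$ satisfying the same Neumann compatibility. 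The point is that after the substitution the operator is the pure heat operator $\tau\partial_t-\Delta$, to which the maximal $L^r$ regularity theory applies directly, and the exponential factor $e^{s/\tau}$ that previously had to be inserted by hand now sits naturally on both sides.

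Applying \cite[Theorem 3.1]{MJ} to $w$ on $[s_0,T]$, and bounding the appropriate trace-space norm of $w(\cdot,s_0)$ by $\|w(\cdot,s_0)\|_{L^r(\Omega)}+\|\Delta w(\cdot,s_0)\|_{L^r(\Omega)}$ through Neumann elliptic regularity, yields
$$\int_{s_0}^T\!\!\intO|\Delta w|^r\le C_r\int_{s_0}^T\!\!\intO(e^{s/\tau}u)^r+C_r\tau\bigl(\|w(\cdot,s_0)\|_{L^r(\Omega)}^r+\|\Delta w(\cdot,s_0)\|_{L^r(\Omega)}^r\bigr).$$
Since $|\Delta w|^r=e^{rs/\tau}|\Delta v|^r$ and $w(\cdot,s_0)=e^{s_0/\tau}v(\cdot,s_0)$, factoring out $e^{rs_0/\tau}$ from the initial-data term delivers precisely the stated inequality.

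The main point requiring care is simply the bookkeeping of the $\tau$-dependence (which is exactly what forces the factor $C_r\tau$ in front of the initial-data norm) and the verification that the trace norm built into the abstract maximal regularity estimate is dominated by the concrete $\|\cdot\|_{L^r}+\|\Delta\cdot\|_{L^r}$ combination used in the statement. Both are essentially mechanical once the right substitution is in place, so the exponential substitution is really the only idea; everything else is inherited from \cite{MJ}.
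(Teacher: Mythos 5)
Your proposal is correct and follows essentially the same route as the paper: an exponential-weight substitution reducing the weighted estimate \eqref{2.2.1} to the unweighted maximal regularity theorem of \cite[Theorem 3.1]{MJ}, plus a shift to the initial time $s_0$. The only difference is that the paper's substitution $w(x,s)={\rm e}^{s}v(x,\tau s)$ also rescales time, so that \cite[Theorem 3.1]{MJ} applies literally to $w_s=\Delta w+{\rm e}^{s}u(x,\tau s)$ and the factor $\tau$ in front of the initial-data term falls out of the change of variables $t=\tau s$ in the time integral, whereas your substitution $w={\rm e}^{t/\tau}v$ keeps the coefficient $\tau$ on $w_t$ and therefore requires the additional (correct, but not carried out in your write-up) bookkeeping that the maximal regularity constant for $\tau\partial_t-\Delta$ is uniform in $\tau$ on the forcing term and contributes exactly one power of $\tau$, rather than $\tau^{r}$, on the initial-data term.
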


\begin{proof}
Let $w(x,s)={\rm e}^sv(x,\tau s)$. We derive that $w$ satisfies
\begin{equation}\left\{
\begin{array}{llc}
w_s(x,s)=\Delta w(x,s)+{\rm e}^su(x,\tau s),&(x,s)\in\Om\times(0,T),\\
\frac{\pa w}{\pa\nu}=0,&(x,s)\in\partial\Om\times(0,T),\\
w(x,0)=v_0(x),&x\in\Om.
\end{array}
\right.
\end{equation}
Applying the Maximal Sobolev Regularity (\cite[Theorem 3.1]{MJ}) to
$w$, we obtain that \bea{est_MJ1} \int_0^T\intO|\Delta w(x,s)|^r \le
C_r\int_0^T\intO|{\rm e}^su(x,\tau s)|^r +
C_r\|v_0\|_{L^r(\Omega)}^r+C_r\|\Delta  v_0\|_{L^r(\Omega)}^r. \eea
Substituting $v$ into the above inequality and changing the
variables imply \beaa\nn \int_0^T\intO {\rm e}^{\frac
r{\tau}s}|\Delta v|^r\le C_r\int_0^T\intO {\rm e}^{\frac
r{\tau}s}u^r + C_r\tau\|v_0\|_{L^r(\Omega)}^r+C_r\tau \|\Delta
v_0\|_{L^r(\Omega)}^r. \eea Consequently, for any $s_0>0$, replacing $v(t)$ by
$v(t+s_0)$, we prove \eqref{2.2.1}.
\end{proof}

\section{Proof of Theorem \ref{th}}

In this section, we are going to prove our main result.
 Since the regularity obtained in (\ref{est_MJ1}) requires that the initial data satisfy homogeneous Neumann boundary conditions,
 we will perform a small time shift and thus use any positive time as the ``initial time" to guarantee
that the respective boundary condition is satisfied naturally.

 Specifically, given any $s_0\in(0,T_{\max})$ such that $s_0\le 1$, from the regularity principle asserted by Lemma \ref{lem2} we know that
$(u(\cdot,s_0),v(\cdot,s_0))\in C^2(\bar{\Omega})$ with $\frac{\partial v(\cdot,s_0)}{\partial \nu}=0$ on $\partial\Omega$, so that in particular we can pick $M>0$ such that
\bea{K}\mathop{\sup}\limits_{0\le\theta\le s_0}\|u(\cdot,\theta)\|_{L^{\infty}(\Omega)}\le M,
 \mathop{\sup}\limits_{0\le\theta\le s_0}\|v(\cdot,\theta)\|_{L^{\infty}(\Omega)}\le M,\text{ and }\|\Delta v(\cdot,s_0)\|_{L^{\infty}(\Omega)}\le M.\eea

Now we proceed to derive an a priori estimate which will constitute
the main part of the work.

\begin{lem}\label{lem33}
Suppose $\Om\subset \mathbb{R}^n$, $n\ge 3$, is a bounded domain
with smooth boundary, $\tau>0$ and $\chi\in \mathbb{R}$. For any
$\gamma>1$, $\eta>0$, there exist ${\mu}_{\gamma,\eta}>0$ and
$C=C(\gamma,|\Omega|,\mu,\chi,\eta,u_0,v_0)>0$ such that if
$\mu>\mu_{\gamma,\eta}$, then
$$\|u(\cdot,t)\|_{L^\gamma(\Om)}\le C$$
for all $t\in(s_0,T_{\max})$.
\end{lem}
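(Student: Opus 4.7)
My plan is to derive the $L^\gamma$ bound by testing the first equation of \eqref{a} against $u^{\gamma-1}$, converting the unsigned $\Delta v$ term via Young's inequality into $|\Delta v|^{\gamma+1}$ and $u^{\gamma+1}$ pieces, using the Maximal Sobolev Regularity of Lemma~\ref{lem2.2} to control the former, and absorbing the latter into the logistic dissipation $-\mu u^2$ once $\mu$ is large enough.

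Concretely, multiplying the $u$-equation by $u^{\gamma-1}$, discarding the nonpositive diffusion term $-(\gamma-1)\intO u^{\gamma-2}\phi(u)|\nabla u|^2$, integrating by parts on the chemotaxis term (so it becomes $-\chi\tfrac{\gamma-1}{\gamma}\intO u^\gamma\Delta v$), and invoking \eqref{g} would yield
\begin{equation*}
\frac{1}{\gamma}\frac{d}{dt}\intO u^\gamma+\mu\intO u^{\gamma+1}\le\frac{\chi(\gamma-1)}{\gamma}\intO u^\gamma|\Delta v|+a\intO u^\gamma.
\end{equation*}
Young's inequality with exponents $\tfrac{\gamma+1}{\gamma}$ and $\gamma+1$ then splits $\tfrac{\chi(\gamma-1)}{\gamma}u^\gamma|\Delta v|\le\eta u^{\gamma+1}+C(\eta,\chi,\gamma)|\Delta v|^{\gamma+1}$, where $\eta$ is precisely the free parameter in the statement; the term $a\intO u^\gamma$ is similarly absorbed into a small fraction of $\intO u^{\gamma+1}$ plus an $|\Omega|$-dependent constant.

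To couple this pointwise-in-$t$ inequality with the space-time MSR statement, I would multiply by the weight $e^{\frac{\gamma+1}{\tau}s}$, integrate in $s$ from $s_0$ to $t$, integrate by parts in $s$ to transfer the exponential onto $\intO u^\gamma$, and again use Young's to reduce the resulting $\int_{s_0}^t e^{\frac{\gamma+1}{\tau}s}\intO u^\gamma\,ds$ to a small multiple of $\int_{s_0}^t e^{\frac{\gamma+1}{\tau}s}\intO u^{\gamma+1}\,ds$ plus a constant. Lemma~\ref{lem2.2} applied with $r=\gamma+1$ then bounds $\int_{s_0}^t e^{\frac{\gamma+1}{\tau}s}\intO|\Delta v|^{\gamma+1}$ by $C_{\gamma+1}\int_{s_0}^t e^{\frac{\gamma+1}{\tau}s}\intO u^{\gamma+1}$ plus boundary data controlled by the constant $M$ from \eqref{K}.

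Defining $\mu_{\gamma,\eta}$ to be a threshold past which the net coefficient of $\int_{s_0}^t e^{\frac{\gamma+1}{\tau}s}\intO u^{\gamma+1}\,ds$ on the left remains strictly positive, all $u^{\gamma+1}$ contributions absorb; dividing the surviving inequality by $e^{\frac{\gamma+1}{\tau}t}$ yields $\intO u^\gamma(\cdot,t)\le C$ uniformly in $t\in(s_0,T_{\max})$. The main obstacle is exactly this final absorption: the MSR constant $C_{\gamma+1}$ is not quantitative, so one must first fix $\eta$, which determines $C(\eta,\chi,\gamma)$, and only then choose $\mu$ sufficiently large, which is why the threshold genuinely depends on $\eta$. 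The precise tuning of the exponential weight to $\tfrac{\gamma+1}{\tau}$, matching the $\tau v_t$-scaling built into Lemma~\ref{lem2.2}, is what allows a pointwise-in-$t$ differential inequality and a space-time integral statement to combine into a single closed estimate.
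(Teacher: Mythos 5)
Your proposal is correct and follows essentially the same route as the paper: testing with $u^{\gamma-1}$, discarding the diffusion term, Young's inequality with the free parameter $\eta$ on $\chi\intO u^\gamma|\Delta v|$, the weighted Maximal Sobolev Regularity of Lemma~\ref{lem2.2} with $r=\gamma+1$, and absorption into the logistic dissipation for $\mu>\mu_{\gamma,\eta}$. The only cosmetic difference is that you use an integrating factor $e^{\frac{\gamma+1}{\tau}s}$ directly, whereas the paper adds and subtracts $\frac{\gamma+1}{\tau\gamma}\intO u^\gamma$ and then applies the variation-of-constants formula; these are the same computation.
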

\begin{proof}  We fix $s_0\in (0,T)$ such that $s_0\le 1$.
For arbitrary $\gamma>1$, take $u^{\gamma-1}$ as a test function for
the first equation in (\ref{a}) and integrate by part to obtain
\begin{align}\nn
\frac{1}{\gamma}\frac{d}{dt}\intO u^\gamma&=-(\gamma-1)\intO u^{\gamma-2}\phi(u)|\nabla u|^2+\chi(\gamma-1)\intO u^{\gamma-1}\nabla u\cdot\nabla v+a\intO u^\gamma-\mu\intO u^{\gamma+1}\\\nn
&\le \chi\frac{\gamma-1}{\gamma}\intO\nabla u^\gamma\cdot\nabla v+a\intO u^\gamma-\mu\intO u^{\gamma+1}\\\nn
&=-\chi\frac{\gamma-1}{\gamma}\intO u^\gamma\Delta v+a\intO u^\gamma-\mu\intO u^{\gamma+1}\\\label{3.1.1}
&= -\frac{\gamma+1}{\tau\gamma}\intO u^\gamma-\chi\frac{\gamma-1}{\gamma}\intO u^\gamma\Delta v+\bigg(a+\frac{\gamma+1}{\tau\gamma}\bigg)\intO u^\gamma-\mu\intO u^{\gamma+1}
\end{align}
for all $t\in(s_0,T_{\max})$. Here by Young's inequality, for any $\ep>0$, there exists $c_1>0$ such that
\begin{align}\label{3.1.2}
\bigg(a+\frac{\gamma+1}{\tau\gamma}\bigg)\intO u^\gamma\le \varepsilon\intO u^{\gamma+1}+c_1(a,\varepsilon,\gamma)|\Omega|,
\end{align}
where $c_1(a,\varepsilon,\gamma)=\frac{1}{\gamma}(1+\frac{1}{\gamma})^{-(\gamma+1)}\varepsilon^{-\gamma}
(a+\frac{\gamma+1}{\tau\gamma})^{\gamma+1}$. Young's inequality also implies that
\begin{align}\nn
-\chi\frac{\gamma-1}{\gamma}\intO u^\gamma \Delta v &\le \chi\intO
u^\gamma|\Delta v|\\\label{3.1.3} &\le \eta\intO
u^{\gamma+1}+c_2\eta^{-\gamma}\chi^{\gamma+1}\intO |\Delta
v|^{\gamma+1}
\end{align}
with
$c_2=\mathop{\sup}\limits_{\gamma>1}\frac{1}{\gamma}(1+\frac{1}{\gamma})^{-(\gamma+1)}<\infty$.
By  substituting (\ref{3.1.2}) and (\ref{3.1.3}) into (\ref{3.1.1}),
we  find that
\begin{align}\nn
\frac{d}{dt}\left(\frac{1}{\gamma}\intO u^\gamma\right)&\le -\frac{\gamma+1}{\tau}\left(\frac 1{\gamma}\intO u^\gamma\right)-(\mu-\ep-\eta)\intO u^{\gamma+1}+c_2\eta^{-\gamma}\chi^{\gamma+1}\intO|\Delta v|^{\gamma+1}\\\label{}
&~~~~~~+c_1(a,\varepsilon,\gamma)|\Om|
\end{align}
holds for all $t\in(s_0,T_{\max})$.
Applying the variation-of-constants formula to the above inequality shows that
\begin{align}\nn
\frac{1}{\gamma}\intO u^\gamma(\cdot,t)&\le {\rm
e}^{-\frac{\gamma+1}{\tau}(t-s_0)}\frac{1}{\gamma}\intO
u^\gamma(\cdot,s_0)-(\mu-\ep-\eta)\int_{s_0}^t{\rm
e}^{-\frac{\gamma+1}{\tau}(t-s)}\intO u^{\gamma+1}\\\nn
&~~~~~~+c_2\eta^{-\gamma}\chi^{\gamma+1}\int_{s_0}^t{\rm
e}^{-\frac{\gamma+1}{\tau}(t-s)}\intO |\Delta
v|^{\gamma+1}+c_1|\Om|\int_{s_0}^t {\rm
e}^{-\frac{\gamma+1}{\tau}(t-s)}\\\nn &\le -(\mu-\ep-\eta){\rm
e}^{-\frac{\gamma+1}{\tau}t}\int_{s_0}^t\intO {\rm
e}^{\frac{\gamma+1}{\tau}s}u^{\gamma+1}\\\label{3.1.4}
&~~~~~~+c_2\eta^{-\gamma}\chi^{\gamma+1}{\rm
e}^{-\frac{\gamma+1}{\tau}t}\int_{s_0}^t\intO {\rm
e}^{\frac{\gamma+1}{\tau}s} |\Delta v|^{\gamma+1}+c_3(a,\ep,\gamma,|\Om|,s_0)
\end{align}
for all $t\in(s_0,T_{\max})$, where
$$c_3(a,\varepsilon,\gamma,|\Omega|)=c_1|\Om|\int_{s_0}^t {\rm
e}^{-\frac{\gamma+1}{\tau}(t-s)}+\frac{1}{\gamma}\intO
u^\gamma(\cdot,s_0)$$ is independent of $t$. Next, we apply Lemma
\ref{lem2.2} to see that there is $C_{\gamma}>0$ such that
\begin{align}\nn
&~~c_2\eta^{-\gamma}\chi^{\gamma+1}{\rm
e}^{-\frac{\gamma+1}{\tau}t}\int^t_{s_0}\int_\Omega {\rm
e}^{\frac{\gamma+1}{\tau}s}|\Delta v|^{\gamma+1}\\\label{3.1.5}
&\leq c_2\eta^{-\gamma}\chi^{\gamma+1}{\rm
e}^{-\frac{\gamma+1}{\tau}t}\left(C_\gamma\int^t_{s_0}\int_\Omega
{\rm e}^{\frac{\gamma+1}{\tau} s}u^{\gamma+1}+C_\gamma\tau {\rm
e}^{\frac{\gamma+1}{\tau}s_0}\|v(\cdot,s_0)\|_{W^{2,\gamma+1}(\Om)}^{\gamma+1}\right).
\end{align}
Inserting \eqref{3.1.5} into \eqref{3.1.4} with some rearrangement, we finally arrive at
\begin{align}\nn
\frac{1}{\gamma}\intO u^{\gamma}(\cdot,t)&\le -\left(\mu-\ep-\eta-c_2C_{\gamma+1}p\eta^{-\gamma}\chi^{\gamma+1}\right){\rm e}^{-\frac{\gamma+1}{\tau}t}\int_s^t\intO {\rm e}^{\frac{\gamma+1}{\tau}s}u^{\gamma+1}+c_3\\
&~~~~~~~~~~+c_2C_{\gamma+1}\tau\eta^{-\gamma}\chi^{\gamma+1}{\rm
e}^{-\frac{\gamma+1}{\tau}(t-s_0)}
\|v(s_0)\|_{W^{2,\gamma+1}(\Om)}^{\gamma+1}
\end{align}
for all $t\in(s_0,T_{\max})$. Let ${\mu}_{\gamma,\eta}=\eta+c_2C_{\gamma+1}\eta^{-\gamma}\chi^{\gamma+1}$, we can choose $\ep\in(0,\mu-\mu_{\gamma,\eta})$ such that $$\mu-\ep-\eta-c_2C_\gamma\eta^{-\gamma}\chi^{\gamma+1}\ge0.$$
It is entailed that
\begin{align}\label{3.1.6}
\frac{1}{\gamma}\intO u^{\gamma}(\cdot,t)&\le c_4
\end{align}
for all $t\in(s_0,T_{\max})$ with
$c_4=c_3+c_2C_\gamma\tau\eta^{-\gamma}\chi^{\gamma+1}
\|v(s_0)\|_{W^{2,\gamma+1}(\Om)}^{\gamma+1}$. This completes the
proof by the above inequality together with \eqref{K}.
\end{proof}

Next, we invoke the well established Moser iteration to get boundedness of $(u,v)$.

\begin{proof} [Proof of Theorem \ref{th}]
By Morse's iteration (Lemma A.1 in \cite{TyW}), we claim that there
is $\gamma_0(n,p)>n>0$, determined via (A.8)--(A.10) in Lemma A.1 of
\cite{TyW}, such that if \bea{ga0}
\|u(\cdot,t)\|_{L^\gamma(\Om)}<\infty \eea for all
$\gamma\ge\gamma_0$ and all $t\in(s_0,T_{\max})$,  then there exists
$C_1>0$ such that \bea{inf} \|u(\cdot,t)\|_{L^\infty(\Om)}\le C_1
\eea for all $t\in(s_0,T_{\max})$. Actually, (\ref{ga0}) implies
that $\nabla v$ is bounded. It is easy to check that all assumptions
of Lemma A.1 are fulfilled.

 Let $\theta_0$ satisfy
$$\mathop{\inf}\limits_{\eta>0}\mu_{\eta,\gamma_0}=\mathop{\inf}\limits_{\eta>0}(\eta+c_2C_{\gamma_0+1}
\eta^{-\gamma_0}\chi^{\gamma_0+1})=\frac1{\theta_0}\chi.$$ We see
that $\frac{\chi}{\mu}<\theta_0$ implies $\mu>\mu_{\eta,\gamma_0}$
with some $\eta>0$. We know By Lemma \ref{lem33} that (\ref{ga0})
holds, and hence (\ref{inf}) is true. Combining with (\ref{K}), we
get that $u$ is bounded in $(0,T_{\max})$. The boundedness of $v$
can be obtained by the standard parabolic regularity. Finally, Lemma
\ref{lem2} yields that $(u,v)$ is global by contradiction.
\end{proof}

{\small
\end{document}